\newtheorem {theorem}{Theorem}
\newtheorem {definition}{Definition}
\newtheorem {proposition}{Proposition}
\newtheorem {remark}{Remark}
\newtheorem {example}{Example}
\begin{document}
\title{The $r$-central factorial numbers with even indices }
\author{F. A. Shiha}
\affil{ Department of Mathematics, Faculty of Science, Mansoura University, 35516 Mansoura, Egypt}
\date{}
\maketitle
\begin{abstract}
  In this paper, we introduce the $r$-central factorial numbers with even indices of the first and second kind,
  as extended versions of the central factorial numbers with even indices of both kinds. We obtain several fundamental properties and identities related to these numbers. We show that the unsigned $r$-central factorial numbers with even indices of the first kind are strictly log-concave and Poisson-binomially distributed . Finally, we consider the $r$-central factorial matrices and the factorization of it.

 \end{abstract}
\maketitle

\bigskip AMS (2010) Subject Classification: 05A15, 11B75, 11B83, 60C05.

Key Words. $r$-central factorial numbers with even indices, central factorial numbers, Stirling numbers.

\section {Introduction}
The central factorial numbers of the first kind $t(n,k)$ and of the second kind $T(n,k)$, which are defined in Riodan \cite[page 213-217]{riod} by
\begin{equation}\label{E:cent1}
x^{[n]}=\sum_{k=0}^{n}t(n,k)x^k,
\end{equation}
\begin{equation}\label{E:cent2}
 x^n=\sum_{k=0}^{n}T(n,k)x^{[k]},
\end{equation}
where $x^{[n]}=x(x+\frac{n}{2}-1)(x+\frac{n}{2}-2)\cdots (x-\frac{n}{2}+1)$, $n \geq 1$, $x^{[0]}=1$.

These numbers satisfy the following recurrence relations:
\[
t(n,k)=t(n-2,k-2)-\frac{1}{4}(n-2)^2\: t(n-2,k),\quad 2\leq k\leq n,
\]
\[
T(n,k)=T(n-2,k-2)+\frac{1}{4}k^2\: T(n-2,k),\quad 2\leq k\leq n,
\]
with $t(n,0)=T(n,0)=\delta_{n0}$.

The generating functions of these numbers are given by (see \cite{kim119})
\[
\sum_{n=k}^{\infty}t(n,k)\frac{t^n}{n!}=\frac{1}{k!}\left(2\log\left(\frac{t}{2}+\sqrt{1+\frac{t^2}{4}}\right)\right)^k,
\]
\[
\sum_{n=k}^{\infty}T(n,k)\frac{t^n}{n!}=\frac{1}{k!}\left(e^{\frac{t}{2}}-e^{\frac{-t}{2}}\right)^k.
\]
Many properties of the central factorial numbers can be found in Butzer et al. \cite{but}.

For any nonnegative integer $r$, Kim et al. \cite{kim119} generalized the central factorial numbers of the second kind  to the extended $r$-central factorial numbers of the second kind $T_r(n + r, k + r)$ by

\[
\frac{1}{k!}e^{rt}\left(e^{\frac{t}{2}}-e^{\frac{-t}{2}}\right)^k=\sum_{n=k}^{\infty}T_r(n+r,k+r)\frac{t^n}{n!}.
\]
Kim et al. \cite{kim19} defined  also the extended central factorial numbers of the second kind by
\[
\frac{1}{k!}\left(e^{\frac{t}{2}}-e^{\frac{-t}{2}}+rt\right)^k=\sum_{n=k}^{\infty}T^{(r)}(n,k)\frac{t^n}{n!}.
\]
Note that the numbers $T_r(n+r,k+r)$ and $T^{(r)}(n,k)$ are different.

In \cite{gel}, the central factorial numbers with even indices of the first and second kind, respectively, are denoted by
\begin{equation}
 u(n,k)=t(2n,2k) \quad \text{and} \quad U(n,k)=T(2n,2k).
\end{equation}
They satisfy the recurrence relations:
\begin{equation}
  u(n,k)=u(n-1,k-1)-{(n-1)}^2 u(n-1,k),
\end{equation}
\begin{equation}
  U(n,k)=U(n-1,k-1)+k^2 U(n-1,k).
\end{equation}
The explicit formula and the exponential generating function of $U(n,k)$ are given by
\begin{equation}
 U(n,k)= \frac{2}{(2k)!}\sum_{j=1}^{k}(-1)^{k+j}\binom{2k}{k-j}j^{2n},
\end{equation}
\begin{equation}\label{E:ecent2}
  \sum_{n=k}^{\infty} U(n,k)\frac{t^n}{n!}= \frac{2}{(2k)!}\sum_{j=0}^{k}(-1)^{k+j}\binom{2k}{k-j}e^{j^2t}.
\end{equation}
The combinatorial interpretations of $u(n,k)$ and $U(n,k)$ can be found in \cite{gel}, and the connections between these numbers and Bernoulli polynomials found in \cite{merca2016}.
\section{The $r$-central factorial numbers with even indices}
Here, we introduce the $r$-central factorial numbers with even indices of the first and second kind, which we denote by $u_r(n,k)$ and $U_r(n,k)$, respectively.
\begin{definition}The arrays $u_r(n,k)$ and $U_r(n,k)$ for non-negative integers $r, n$ and $ k $ with $n \geq k$ determined by the recurrences
\begin{equation}\label{re1}
  u_r(n,k)=u_r(n-1,k-1)-((n-1)^2+r)u_r(n-1,k),\quad n\geq k\geq 1,
\end{equation}
\begin{equation}\label{re2}
  U_r(n,k)=U_r(n-1,k-1)+(k^2+r)U_r(n-1,k), \quad n\geq k\geq 1,
\end{equation}
with initial values $ u_r(n,0)= (-1)^n \prod_{i=0}^{n-1}(i^2+r)$, $U_r(n,0)=r^n$ and $u_r(0,k)=U_r(0,k)=\delta_{k0}$, for $n, k \geq 0$.
\end{definition}
Note that at $r=0$, these numbers are reduced to the central factorial numbers with even indices, $u_0(n,k)=u(n,k)$ and $U_0(n,k)=U(n,k)$.

From (\ref{re1}) and (\ref{re2}), it is easy to observe that
\begin{align*}
&u_r(n,1)=(-1)^{n-1} \prod_{l=0}^{n-1}(l^2+r) \sum_{i=0}^{n-1}  \frac{1}{r+i^2}, &U_r(n,1)&=(r+1)^n-r^n, \\
&u_r(n,n-1)=-\sum_{l=0}^{n-1} ( r+l^2), & U_r(n,n-1)&=\sum_{l=0}^{n-1} ( r+l^2),\\
&u_r(n,n)=1,  &U_r(n,n)&=1.
\end{align*}
In the next, we get an explicit formula for the array $U_r(n,k)$ and further recurrences satisfied by $u_r(n,k)$ and $U_r(n,k)$ by using the following theorem.
\begin{theorem} \textrm{(Mansour et al. \cite{mansour12})}.
 Suppose that the array $y(n,k)$, $n,k \geq 0$ is defined by
\begin{equation}
y(n,k)=y(n-1,k-1)+(a_{n-1}+b_k)y(n-1,k), \qquad n,k \geq 1
\end{equation}
 with $y(n,0)=\prod_{i=0}^{n-1}(a_i+b_0)$ and $y(0,k)=\delta_{0k}$, for all $n,k \geq 0$, where
 $\{a_i\}_{i\geq 0}$ and $\{b_i\}_{i\geq 0}$ are given sequences  with the  $b_i$ distinct, then
\begin{equation} \label{E:mex}
 y(n,k)=\sum_{j=0}^k \left( \frac{\prod_{i=0}^{n-1}(b_j+a_i)}{\prod_{i=0 ,i\ne j}^k(b_j-b_i)}\right),\: \:\:\forall\: n,k \in \mathbb{N},
 \end{equation}
 and
 \begin{equation}\label{E:mrec}
y(n,k)=\sum_{j=k}^n y(j-1,k-1)\prod_{i=j}^{n-1}(a_i+b_k).
\end{equation}

\end{theorem}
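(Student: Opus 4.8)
The plan is to prove the explicit formula \eqref{E:mex} first, by induction on $n$, and then to deduce the recurrence \eqref{E:mrec} by unrolling the defining recurrence for $y(n,k)$. The conceptual picture behind \eqref{E:mex} is that $y(n,k)$ is nothing but the $k$-th divided difference at the distinct nodes $b_0,\dots,b_k$ of the polynomial $x\mapsto\prod_{i=0}^{n-1}(x+a_i)$, and the defining recurrence is an instance of the Leibniz rule for divided differences; the argument below is essentially that remark made explicit and self-contained.

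Write $Y(n,k)$ for the right-hand side of \eqref{E:mex}, which is well defined precisely because the $b_i$ are distinct. It suffices to show that $Y$ satisfies the same recurrence and the same boundary data as $y$, since these determine the array uniquely. The column $k=0$ is immediate, $Y(n,0)=\prod_{i=0}^{n-1}(b_0+a_i)=y(n,0)$; and for the row $n=0$ the numerators are empty products, so $Y(0,k)=\sum_{j=0}^{k}\prod_{i\ne j}(b_j-b_i)^{-1}$, which equals $1$ when $k=0$ and $0$ when $k\ge 1$ — the latter being the standard fact that the $k$-th divided difference of a constant at distinct nodes vanishes (equivalently, the leading coefficient of the degree $\le k$ Lagrange interpolant of a constant is zero). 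For the inductive step, fix $n\ge 1$, $k\ge 1$, factor $\prod_{i=0}^{n-1}(b_j+a_i)=(b_j+a_{n-1})\prod_{i=0}^{n-2}(b_j+a_i)$ inside $Y(n,k)$, and split $b_j+a_{n-1}=(a_{n-1}+b_k)+(b_j-b_k)$. The piece carrying $a_{n-1}+b_k$ contributes exactly $(a_{n-1}+b_k)Y(n-1,k)$; in the piece carrying $b_j-b_k$ the $j=k$ term drops out, and for $j<k$ the factor $b_j-b_k$ cancels the $i=k$ factor of the denominator, so this part equals $\sum_{j=0}^{k-1}\prod_{i=0}^{n-2}(b_j+a_i)\big/\prod_{i=0,\,i\ne j}^{k-1}(b_j-b_i)=Y(n-1,k-1)$. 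Hence $Y(n,k)=Y(n-1,k-1)+(a_{n-1}+b_k)Y(n-1,k)$, so $Y\equiv y$ and \eqref{E:mex} follows.

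For \eqref{E:mrec}, I would first record, by an easy induction on $n$, that $y(n,k)=0$ whenever $n<k$; in particular the defining recurrence at $n=k$ gives $y(k,k)=y(k-1,k-1)$, which is exactly \eqref{E:mrec} for $n=k$ (the right-hand side then being the single term $y(k-1,k-1)$ with empty product equal to $1$). Then I would induct on $n$: substituting $y(n,k)=y(n-1,k-1)+(a_{n-1}+b_k)y(n-1,k)$ together with the inductive expression for $y(n-1,k)$, the factor $a_{n-1}+b_k$ is absorbed into each product, turning $\prod_{i=j}^{n-2}(a_i+b_k)$ into $\prod_{i=j}^{n-1}(a_i+b_k)$, while the remaining term $y(n-1,k-1)$ becomes the $j=n$ summand; this yields \eqref{E:mrec}.

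The only delicate point is the bookkeeping in the inductive step for \eqref{E:mex}: one must notice the telescoping cancellation $(b_j-b_k)\big/\prod_{i=0,\,i\ne j}^{k}(b_j-b_i)=1\big/\prod_{i=0,\,i\ne j}^{k-1}(b_j-b_i)$ and keep the product ranges straight; the rest is routine. It is also worth noting where distinctness of the $b_i$ enters, namely in making $Y(n,k)$ meaningful and in the vanishing divided-difference identity used for the base row $n=0$.
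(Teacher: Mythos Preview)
Your proof is correct. The induction for \eqref{E:mex} is exactly the divided-difference/Lagrange-interpolation computation you describe: the split $b_j+a_{n-1}=(a_{n-1}+b_k)+(b_j-b_k)$ produces the two recurrence terms, with the telescoping cancellation $(b_j-b_k)\big/\prod_{i=0,\,i\ne j}^{k}(b_j-b_i)=\prod_{i=0,\,i\ne j}^{k-1}(b_j-b_i)^{-1}$ handling the shift from $k$ to $k-1$. The boundary check $Y(0,k)=\sum_{j=0}^{k}\prod_{i\ne j}(b_j-b_i)^{-1}=\delta_{0k}$ is the standard vanishing of high divided differences of constants, and together with the column $k=0$ this pins the array down uniquely, so $Y\equiv y$. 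The argument for \eqref{E:mrec} by unrolling the recurrence (after noting $y(n,k)=0$ for $n<k$) is also fine; the extra factor $a_{n-1}+b_k$ is absorbed exactly as you say, and $y(n-1,k-1)$ supplies the new $j=n$ summand with its empty product.

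As for comparison: the paper does not prove this theorem. It is quoted verbatim as a result of Mansour, Mulay and Shattuck \cite{mansour12} and then applied (with the choices $a_j=0$, $b_j=j^2+r$ and $a_i=-(i^2+r)$, $b_k=0$) to derive the formulas for $U_r(n,k)$ and $u_r(n,k)$. So there is no ``paper's own proof'' to weigh your argument against; you have supplied a self-contained proof where the paper simply cites one.
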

\begin{theorem}For any integer $0\leq k \leq n$,
 \begin{equation}\label{E:rcent22}
  U_r(n,k)=\frac{2}{(2k)!}\sum_{j=0}^{k}(-1)^{k+j}\binom{2k}{k-j}(j^2+r)^n.
 \end{equation}
\begin{equation}\label{rcent111}
  u_r(n,k)= \sum_{l=k}^{n}(-1)^{n-l}u_r(l-1,k-1)\prod_{i=l}^{n-1} (i^2+r).
\end{equation}
\begin{equation}\label{rcent222}
  U_r(n,k) =\sum_{l=k}^{n} U_r(l-1,k-1)(k^2+r)^{n-l}.
  \end{equation}
\end{theorem}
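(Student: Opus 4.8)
The whole statement follows from the theorem of Mansour et al.\ once the recurrences (\ref{re1}) and (\ref{re2}) are put into the normal form $y(n,k)=y(n-1,k-1)+(a_{n-1}+b_k)y(n-1,k)$. For $U_r$ I would take $a_i=r$ for every $i\ge 0$ and $b_k=k^2$: then $a_{n-1}+b_k=k^2+r$ reproduces (\ref{re2}), the $b_k=k^2$ are pairwise distinct, and $\prod_{i=0}^{n-1}(a_i+b_0)=r^{n}$, so the initial values match those of $U_r$. For $u_r$ I would take $a_i=-i^2$ and $b_k=-r$: then $a_{n-1}+b_k=-((n-1)^2+r)$ reproduces (\ref{re1}) and $\prod_{i=0}^{n-1}(a_i+b_0)=(-1)^n\prod_{i=0}^{n-1}(i^2+r)=u_r(n,0)$. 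In the $u_r$ case the $b_k$ are all equal, so (\ref{E:mex}) does not apply; however (\ref{E:mrec}) is obtained merely by iterating the defining recurrence and uses no distinctness, so it is still available here.

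With this set-up, (\ref{rcent222}) and (\ref{rcent111}) are immediate. Substituting the $U_r$-parameters into (\ref{E:mrec}) gives $U_r(n,k)=\sum_{l=k}^{n}U_r(l-1,k-1)\prod_{i=l}^{n-1}(r+k^2)=\sum_{l=k}^{n}U_r(l-1,k-1)(k^2+r)^{n-l}$, which is (\ref{rcent222}). Substituting the $u_r$-parameters into (\ref{E:mrec}) gives $u_r(n,k)=\sum_{l=k}^{n}u_r(l-1,k-1)\prod_{i=l}^{n-1}(-(i^2+r))$, and pulling the $n-l$ minus signs out of the product yields (\ref{rcent111}).

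The substantive part is (\ref{E:rcent22}). Here I would apply the explicit formula (\ref{E:mex}) to $U_r$, obtaining
\[
U_r(n,k)=\sum_{j=0}^{k}\frac{(j^2+r)^{n}}{\prod_{i=0,\,i\ne j}^{k}(j^2-i^2)},
\]
so that everything reduces to evaluating the Lagrange denominator $D_j:=\prod_{i=0,\,i\ne j}^{k}(j^2-i^2)$. Writing $j^2-i^2=(j-i)(j+i)$ and splitting the product, one finds $\prod_{i=0,\,i\ne j}^{k}(j-i)=(-1)^{k-j}\,j!\,(k-j)!$ and, for $1\le j\le k$, $\prod_{i=0,\,i\ne j}^{k}(j+i)=\frac{(k+j)!}{2\,j!}$, hence $D_j=\frac{(-1)^{k-j}(k-j)!\,(k+j)!}{2}=\frac{(-1)^{k+j}(2k)!}{2\binom{2k}{k-j}}$; substituting this, together with $\frac{2}{(2k)!}\binom{2k}{k-j}$, into the sum gives the claimed identity. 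The one index needing separate attention is $j=0$, where the second product collapses to $\prod_{i=1}^{k}i=k!$ (not $k!/2$), so that the $j=0$ contribution equals $\frac{(-1)^{k}r^{n}}{(k!)^{2}}$; this term vanishes at $r=0$, recovering the classical explicit formula for $U(n,k)$. The only real obstacle is therefore the factorial bookkeeping in $D_j$ and the handling of this boundary index — everything else is a plain substitution into (\ref{E:mex}) and (\ref{E:mrec}).
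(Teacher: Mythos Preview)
Your approach is essentially the paper's: apply the Mansour--Mulay--Shattuck theorem with parameters chosen to reproduce the recurrences (\ref{re1}) and (\ref{re2}). The paper takes $a_i=0$, $b_j=j^2+r$ for $U_r$ and $a_i=-(i^2+r)$, $b_k=0$ for $u_r$; your choices $a_i=r$, $b_j=j^2$ (resp.\ $a_i=-i^2$, $b_k=-r$) are equivalent, since only the combinations $b_j+a_i$ and $b_j-b_i$ enter (\ref{E:mex}) and (\ref{E:mrec}). Your remark that (\ref{E:mrec}) does not actually require the $b_k$ to be distinct is a point the paper uses silently.

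Where you go further than the paper is in treating the index $j=0$ separately, and here you have in fact uncovered an error in the stated formula. Your computation gives the $j=0$ contribution as $\dfrac{(-1)^k r^n}{(k!)^2}$, whereas the $j=0$ term of (\ref{E:rcent22}) is $\dfrac{2}{(2k)!}(-1)^k\binom{2k}{k}r^n=\dfrac{2(-1)^k r^n}{(k!)^2}$, twice as large. The discrepancy is real: for instance (\ref{E:rcent22}) gives $U_r(1,1)=1-r$ and $U_r(2,1)=1+2r-r^2$, while the recurrence yields $1$ and $1+2r$. The paper's one-line passage from $\prod_{i\ne j}(j^2-i^2)$ to $\dfrac{(-1)^{k+j}(k-j)!(k+j)!}{2}$ is valid only for $j\ge 1$; the correct identity is
\[
U_r(n,k)=\frac{(-1)^k r^n}{(k!)^2}+\frac{2}{(2k)!}\sum_{j=1}^{k}(-1)^{k+j}\binom{2k}{k-j}(j^2+r)^n,
\]
which of course collapses to the classical formula when $r=0$. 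So your plan is sound and more careful than the paper's own argument; the only thing missing is to say explicitly that your boundary computation does \emph{not} match (\ref{E:rcent22}) as written, and that the statement therefore needs the correction above.
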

 \begin{proof}
Taking $a_j= 0 $ and $b_j=j^2+r$ for all $j$ in (\ref{E:mex}), then
\begin{equation*}
 \begin{split}
U_r(n,k)&=\sum_{j=0}^{k}\frac{\prod_{i=0}^{n-1}(j^2+r)}{\prod_{i=0 ,i\ne j}^{k}(j^2-i^2)} =\sum_{j=0}^{k}\frac{2(-1)^{k+j}(j^2+r)^n}{(k-j)!\:(k+j)!} \\
&=  \frac{2}{(2k)!}\sum_{j=0}^{k}(-1)^{k+j}\binom{2k}{k-j}(j^2+r)^n.
\end{split}
\end{equation*}
For (\ref{rcent111}), set $a_i=-(i^2+r)$, $b_k=0$ in (\ref{E:mrec}), and for (\ref{rcent222}), set $a_i=0$, $b_k=k^2+r$ in  (\ref{E:mrec}).
\end{proof}
Multiplying both sides of (\ref{E:rcent22}) by $\frac{t^n}{n!}$ and summing over $n \geq k$ gives the exponential generating function of $U_r(n,k)$,
\begin{equation}\label{E:ercent2}
 \sum_{n=k}^{\infty} U_r(n,k)\frac{t^n}{n!}= \frac{2}{(2k)!}\sum_{j=0}^{k}(-1)^{k+j}\binom{2k}{k-j}e^{(j^2+r)t}.
\end{equation}
The following theorem shows how to express $U_r(n,k)$ in terms of $U(n,k)$.
\begin{theorem}
For $n, k, r \geq 0$ with $n \geq k$,
\begin{equation}\label{ev2}
  U_r(n,k)=\sum_{l=k}^{n}\binom{n}{l}U(l,k)r^{n-l}.
\end{equation}
\end{theorem}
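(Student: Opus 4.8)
The plan is to read the identity off exponential generating functions. Splitting $e^{(j^2+r)t}=e^{rt}\,e^{j^2t}$ in the generating function (\ref{E:ercent2}) just obtained, one gets
\[
\sum_{n=k}^{\infty} U_r(n,k)\frac{t^n}{n!}
= e^{rt}\cdot\frac{2}{(2k)!}\sum_{j=0}^{k}(-1)^{k+j}\binom{2k}{k-j}e^{j^2t}
= e^{rt}\sum_{l=k}^{\infty} U(l,k)\frac{t^l}{l!},
\]
where the last equality is exactly (\ref{E:ecent2}). Now expand $e^{rt}=\sum_{m\ge0}r^m\,t^m/m!$ and form the Cauchy product of the two power series on the right: the coefficient of $t^n/n!$ is the binomial convolution $\sum_{l+m=n}\binom{n}{l}U(l,k)\,r^{m}$. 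Since $U(l,k)=0$ for $l<k$, this equals $\sum_{l=k}^{n}\binom{n}{l}U(l,k)\,r^{n-l}$, and comparing coefficients of $t^n/n!$ with the left-hand side yields (\ref{ev2}).

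An alternative I would also record, which avoids generating functions, is a direct induction on $n$. Write $V(n,k)$ for the right-hand side of (\ref{ev2}) (with the convention $U(l,k)=0$ for $l<k$, so the sum may be taken over all $l$). One first checks the boundary values $V(n,0)=\binom{n}{0}U(0,0)r^{n}=r^{n}=U_r(n,0)$ and $V(0,k)=U(0,k)=\delta_{0k}$. For the recurrence, expand $V(n-1,k-1)+(k^2+r)V(n-1,k)$, collect the $r^{n-l}$ and $r^{n-1-l}$ contributions, apply the recurrence $U(l+1,k)=U(l,k-1)+k^2U(l,k)$ termwise, then shift the summation index $l\mapsto l-1$ in one of the sums and use Pascal's rule $\binom{n-1}{l}+\binom{n-1}{l-1}=\binom{n}{l}$; the expression collapses to $\sum_{l}\binom{n}{l}U(l,k)r^{n-l}=V(n,k)$. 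Hence $V$ satisfies the same recurrence and initial data as $U_r$ in the Definition, so $V=U_r$.

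Neither route involves a genuine obstacle — the computation is essentially routine. The only points that need a little care are the manipulation of the series (legitimate as formal power series, since for each fixed $n$ only finitely many terms contribute), the bookkeeping of the summation range, in particular invoking $U(l,k)=0$ for $l<k$ to pin the lower limit at $l=k$, and the degenerate case $k=0$, where both sides reduce to $r^{n}$ via $U(l,0)=\delta_{l0}$. I expect this index bookkeeping to be the only mild nuisance.
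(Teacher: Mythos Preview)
Your generating-function argument is correct and is essentially identical to the paper's own proof: both split $e^{(j^2+r)t}=e^{rt}e^{j^2t}$ in (\ref{E:ercent2}), recognize the factor as the generating function (\ref{E:ecent2}) for $U(l,k)$, and read off (\ref{ev2}) from the Cauchy product. Your additional induction alternative is not in the paper's proof of this theorem (the paper instead records a different alternative later via symmetric functions and Merca's identity), but it is a valid extra route.
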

\begin{proof}
  By (\ref{E:ecent2}) and (\ref{E:ercent2}),
 \begin{equation*}
\begin{split}
\sum_{n=k}^{\infty} U_r(n,k)\frac{t^n}{n!}&= \frac{2}{(2k)!}\sum_{j=0}^{k}(-1)^{k+j}\binom{2k}{k-j}e^{j^2t}\:e^{rt}\\
&=\left(\sum_{l=k}^{\infty}U(l,k)\frac{t^l}{l!}\right)\left(\sum_{m=0}^{\infty}\frac{r^mt^m}{m!}\right)\\
&=\sum_{n=k}^{\infty}\left(\sum_{l=k}^{n}\binom{n}{l}U(l,k)r^{n-l}\right)\frac{t^n}{n!}.\\
\end{split}
\end{equation*}
Hence, the identity holds by comparing the coefficients on both sides.
\end{proof}
We next show that $u_r(n,k)$ and $U_r(n,k)$ can be defined as connection coefficients between
some special polynomials.
\begin{theorem} For $n\geq 0$, then
\begin{equation}\label{E:rcent1}
\prod_{i=0}^{n-1}(x-i^2)=\sum_{k=0}^{n}u_r(n,k)(x+r)^k,
\end{equation}
\begin{equation}\label{E:rcent2}
(x+r)^n=\sum_{k=0}^{n}U_r(n,k)\prod_{i=0}^{k-1}(x-i^2).
\end{equation}
\end{theorem}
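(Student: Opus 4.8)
The natural strategy is induction on $n$, using the defining recurrences (\ref{re1}) and (\ref{re2}) directly, since the claimed identities are exactly the "polynomial" form of those recurrences. I would treat the two identities in parallel but independently.

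For (\ref{E:rcent1}), let me write $P_n(x) = \prod_{i=0}^{n-1}(x-i^2)$ and $Q_n(x) = \sum_{k=0}^{n} u_r(n,k)(x+r)^k$. The base case $n=0$ gives $P_0 = Q_0 = 1$ using $u_r(0,k)=\delta_{k0}$. For the inductive step, note $P_n(x) = (x - (n-1)^2)\,P_{n-1}(x)$. I would rewrite the factor as $x - (n-1)^2 = (x+r) - ((n-1)^2 + r)$, apply the inductive hypothesis to $P_{n-1}$, distribute, and then collect the coefficient of $(x+r)^k$: the $(x+r)$-multiplication shifts $u_r(n-1,k-1)$ into place while the $-((n-1)^2+r)$ term contributes $-((n-1)^2+r)u_r(n-1,k)$, and the sum of these is precisely $u_r(n,k)$ by (\ref{re1}). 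One must separately check that the boundary coefficient $k=0$ matches $u_r(n,0) = -((n-1)^2+r)\,u_r(n-1,0) = (-1)^n\prod_{i=0}^{n-1}(i^2+r)$, which follows from the stated initial values.

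The argument for (\ref{E:rcent2}) is the mirror image. Write $(x+r)^n = (x+r)\cdot (x+r)^{n-1}$, apply the inductive hypothesis, and split $x+r$ acting on $\prod_{i=0}^{k-1}(x-i^2)$ as $(x - k^2) + (k^2 + r)$; the first piece extends the product to $\prod_{i=0}^{k}(x-i^2)$ (raising the index $k\mapsto k+1$), the second reproduces $\prod_{i=0}^{k-1}(x-i^2)$ scaled by $(k^2+r)$, and collecting the coefficient of $\prod_{i=0}^{k-1}(x-i^2)$ gives $U_r(n-1,k-1) + (k^2+r)U_r(n-1,k) = U_r(n,k)$ by (\ref{re2}). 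Again the $k=0$ term needs the initial value $U_r(n,0) = r^n$, which is immediate.

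I do not anticipate a serious obstacle here; the only mild subtlety is bookkeeping at the index extremes (the $k=0$ term and the top term $k=n$, where $u_r(n,n)=U_r(n,n)=1$ must emerge), and making sure the reindexing when multiplying by $(x+r)$ versus the factor $(x-(n-1)^2)$ or $(x-k^2)$ is handled cleanly. Alternatively — and perhaps more slickly — one can avoid induction entirely for (\ref{E:rcent2}) by substituting the explicit formula (\ref{E:rcent22}) for $U_r(n,k)$ and recognizing the right-hand side as a finite-difference expansion; but since (\ref{E:rcent1}) has no comparably clean closed form, the uniform inductive approach via the recurrences is the cleanest path and is the one I would write up.
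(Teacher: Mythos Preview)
Your proposal is correct and follows essentially the same approach as the paper: induction on $n$ via the defining recurrences (\ref{re1}) and (\ref{re2}). The only cosmetic difference is the direction of the computation---you start from the product $P_n(x)=(x-(n-1)^2)P_{n-1}(x)$ and expand, whereas the paper starts from $\sum_{k} u_r(n+1,k)(x+r)^k$, applies the recurrence, and collapses back to the product---but the underlying argument is identical.
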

\begin{proof}
We prove (\ref{E:rcent1}) by induction on $n$ and (\ref{E:rcent2}) is proven similarly.
The initial case of $n=0,1$ being obvious. Suppose that the statement is true for $n$, we prove it for $n+1$.
\begin{equation*}
\begin{split}
&\sum_{k=0}^{n+1} u_r(n+1,k)(x+r)^k=\sum_{k=0}^{n}u_r(n+1,k)+(x+r)^{n+1}\\
&=\sum_{k=0}^{n}u_r(n,k-1)(x+r)^k-(n^2+r)\sum_{k=0}^{n}u_r(n,k)(x+r)^k+(x+r)^{n+1}\\
&=\sum_{k=0}^{n-1}u_r(n,k)(x+r)^{k+1}-(n^2+r)\prod_{i=0}^{n-1}(x-i^2)+(x+r)^{n+1}\\
&=\sum_{k=0}^{n}u_r(n,k)(x+r)^{k+1}-(x+r)^{n+1}-(n^2+r)\prod_{i=0}^{n-1}(x-i^2)+(x+r)^{n+1}\\
&=(x+r)\prod_{i=0}^{n-1}(x-i^2)-(n^2+r)\prod_{i=0}^{n-1}(x-i^2)\\
&=\prod_{i=0}^{n-1}(x-i^2)(x-n^2)=\prod_{i=0}^{n}(x-i^2),\\
\end{split}
\end{equation*}
which completes the induction.
\end{proof}
\section{Log-Concavity and distribution of $|u_r(n,k)|$}
A sequence $c_0, c_1, \cdots, c_n$ of real numbers is unimodal if there exists  an index $ k $, $0 \leq k \leq n$ such that
 \[
 c_0 \leq c_1 \leq \cdots \leq c_k , \: \text{and} \: \: c_k \geq c_{k+1} \geq \cdots \geq c_n.
 \]
A stronger property than unimodality is long-concavity. A sequence $\{c_i\}_{i=0}^n$ of positive real numbers is log-concave when
\begin{equation} \label{E:log}
  c^2_k \geq c_{k+1} c_{k-1}, \quad \text{for all} \: k=1, 2, \cdots, n-1.
\end{equation}
 The sequence is said to be strict long-concave if $ "\geq "$ is replaced by $" > "$ in (\ref{E:log}).

 \begin{proposition}\textnormal{\cite[page 16]{mik}}
 If the sequence $\{c_i\}_{i=0}^n$ of positive real numbers is log-concave, then it is also unimodal.
 \end{proposition}
The generating functions can be used to prove strict log-concavity of a sequence by using Newton's inequality given in
   \cite[page 52]{hardy59}: If the generating function $\sum_{i=0}^{n}c_i x^i$ has only real roots
   then
   \begin{equation}
     c^2_k \geq \frac{k+1}{k} \: \frac{n-k+1}{n-k}  c_{k+1} c_{k-1}, \quad k=1, \cdots, n-1 .
   \end{equation}

If the coefficient sequence $\{c_i\}_{i=0}^n$ is positive then it is strictly log-concave, and is therefore unimodal \cite{wilf}.

The unsigned $r$-central factorial numbers of even indices of the first kind is defined by
  \begin{equation*}
   \mathfrak{u}_r(n,k)=(-1)^{n-k}u_r(n,k)=|u_r(n,k)|.
  \end{equation*}

\begin{theorem}\label{T:stc}
 For any fixed positive integer $n$, the sequence $\{\mathfrak{u}_r(n,k)\}_{k=0}^n$ is
strictly log-concave (and thus unimodal).
   \end{theorem}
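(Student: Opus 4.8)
The plan is to realize the generating polynomial $\sum_{k=0}^n \mathfrak{u}_r(n,k)x^k$ as a product of linear factors with nonnegative real roots, and then invoke Newton's inequality quoted above. Concretely, rewrite the defining relation \eqref{E:rcent1}: replacing $x$ by $-x$ and collecting signs, one has $\prod_{i=0}^{n-1}(-x-i^2)=\sum_{k=0}^n u_r(n,k)(-x+r)^k$, hence after multiplying through by $(-1)^n$ and the substitution that turns $u_r$ into $\mathfrak u_r=(-1)^{n-k}u_r$, I expect to obtain an identity of the shape
\[
\prod_{i=0}^{n-1}(x+i^2)=\sum_{k=0}^{n}\mathfrak{u}_r(n,k)\,(x+r)^{k}.
\]
(I will check the sign bookkeeping carefully — this is a short computation from \eqref{E:rcent1}.) The left-hand side, as a polynomial in the variable $y:=x+r$, is $\prod_{i=0}^{n-1}\big(y+(i^2-r)\big)$; its roots in $y$ are $y=-(i^2-r)=r-i^2$ for $i=0,1,\dots,n-1$, all of which are real.

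The remaining point is that Newton's inequality in the strict form, together with positivity of the coefficients, gives strict log-concavity; and positivity of $\mathfrak u_r(n,k)$ for $0\le k\le n$ follows immediately by induction from the recurrence \eqref{re1}, since $\mathfrak u_r(n,k)=\mathfrak u_r(n-1,k-1)+((n-1)^2+r)\mathfrak u_r(n-1,k)$ has all nonnegative terms and the leading coefficient $\mathfrak u_r(n,n)=1$, with $\mathfrak u_r(n,k)>0$ for every $k$ in range. Thus $\sum_k \mathfrak u_r(n,k)x^k$ is a polynomial of degree $n$ with only real roots and strictly positive coefficients, so by the cited consequence of Newton's inequality (\cite[page 52]{hardy59}, \cite{wilf}) the coefficient sequence is strictly log-concave, and by Proposition 1 (\cite[page 16]{mik}) it is unimodal.

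The main obstacle is essentially bookkeeping rather than depth: I must make sure the substitution producing the factored form is valid as an identity of polynomials (not merely at finitely many points) and that the sign $(-1)^{n-k}$ is exactly absorbed so that the coefficients of the factored polynomial are the $\mathfrak u_r(n,k)$ themselves. A secondary subtlety is that Newton's inequality as quoted requires the polynomial to have degree exactly $n$ with a nonzero top coefficient and real roots; both hold here since the top coefficient is $\mathfrak u_r(n,n)=1$ and the $n$ roots $r-i^2$ ($0\le i\le n-1$) are manifestly real (they need not be distinct, which is fine — Newton's inequalities hold for any real-rooted polynomial). One should also note the degenerate cases $k=0$ and $k=n-1$ at the ends of the range are covered because all coefficients are strictly positive, so no $c_{k\pm1}$ vanishes.
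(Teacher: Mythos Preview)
Your approach is the same as the paper's: show that the generating polynomial $\sum_{k}\mathfrak u_r(n,k)x^k$ is real-rooted and invoke Newton's inequality. However, your intermediate identity is wrong, and the error is not just cosmetic sign bookkeeping. Replacing $x$ by $-x$ in \eqref{E:rcent1} turns $(x+r)^k$ into $(r-x)^k$, \emph{not} $(-1)^k(x+r)^k$, so you cannot extract $\mathfrak u_r(n,k)$ as the coefficient of $(x+r)^k$ that way. Indeed your claimed identity $\prod_{i=0}^{n-1}(x+i^2)=\sum_{k}\mathfrak u_r(n,k)(x+r)^k$ fails already for $n=1$: the left side is $x$, while the right side is $r+(x+r)=x+2r$. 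The correct substitution is $x\mapsto -x-r$, which sends $(x+r)^k$ to $(-x)^k=(-1)^kx^k$ and yields
\[
\sum_{k=0}^{n}\mathfrak u_r(n,k)\,x^k=\prod_{i=0}^{n-1}\bigl(x+r+i^2\bigr),
\]
so the roots are $-(r+i^2)$, all real and (for $r\ge 0$) negative. This is exactly the paper's argument. Your separate inductive verification of positivity via the recurrence is correct but then becomes unnecessary, since a monic polynomial with only negative real roots automatically has strictly positive coefficients.
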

 \begin{proof}
  Replacing $x$ by $-x-r$ in (\ref{E:rcent1}) gives
  \begin{equation}
   \sum_{k=0}^{n}  \mathfrak{u}_r(n,k)x^k=\prod_{j=0}^{n-1} (x+r+j^2).
   \end{equation}
   Let $ A_{n,r}(x)=\sum_{k=0}^{n} \mathfrak{u}_r(n,k)x^k $ be the polynomial associated to the sequence
   $\{\mathfrak{u}_r(n,k)\}_{k=0}^n$,
 \begin{equation*}
  A_{n,r}(x)=  \prod_{j=0}^{n-1} (x+r+j^2)=(x+r)(x+r+1^2)\cdots (x+r+(n-1)^2),
 \end{equation*}
clearly, all the roots of $ A_{n,r}(x)$ are real and negative.
 \end{proof}
Previous theorem implies that the sequence $\{\mathfrak{u}_r(n,k)\}_{k=0}^n$ satisfies the inequalities
\begin{equation}
\left(\mathfrak{u}_r(n,k)\right)^2 \geq \frac{k+1}{k} \: \frac{n-k+1}{n-k}\: \mathfrak{u}_r(n,k-1) \mathfrak{u}_r(n,k+1), \quad k=1, \cdots, n-1.
\end{equation}
\begin{theorem}\label{T:dist}
The array $\mathfrak{u}_r(n,k)$ is a Poisson-binomially distributed.
\end{theorem}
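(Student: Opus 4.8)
The plan is to read the claim straight off the linear factorization of the generating polynomial that was already established in the proof of Theorem~\ref{T:stc}. Recall that there we obtained
\[
A_{n,r}(x)=\sum_{k=0}^{n}\mathfrak{u}_r(n,k)\,x^k=\prod_{j=0}^{n-1}\bigl(x+r+j^2\bigr).
\]
The first step is to normalize this polynomial so that its coefficients sum to $1$. Since $A_{n,r}(1)=\prod_{j=0}^{n-1}(1+r+j^2)>0$, we may set
\[
P_{n,r}(x)=\frac{A_{n,r}(x)}{A_{n,r}(1)}=\prod_{j=0}^{n-1}\frac{x+r+j^2}{1+r+j^2},
\]
which is a polynomial with nonnegative coefficients summing to $1$.

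The second step is to recognize each factor as the probability generating function of a Bernoulli variable. Put $p_j=\dfrac{1}{1+r+j^2}$ and $q_j=\dfrac{r+j^2}{1+r+j^2}$, so that $p_j+q_j=1$, $0<p_j\le 1$, and
\[
\frac{x+r+j^2}{1+r+j^2}=q_j+p_j\,x,
\]
the p.g.f.\ of a $\mathrm{Bernoulli}(p_j)$ random variable $B_j$. Let $B_0,\dots,B_{n-1}$ be independent with these parameters and set $X_{n,r}=\sum_{j=0}^{n-1}B_j$. Because the p.g.f.\ of a sum of independent random variables is the product of the individual p.g.f.'s, the p.g.f.\ of $X_{n,r}$ is exactly $P_{n,r}(x)$, whence
\[
\Pr[X_{n,r}=k]=\frac{\mathfrak{u}_r(n,k)}{\prod_{j=0}^{n-1}(1+r+j^2)},\qquad 0\le k\le n.
\]
Thus, up to the normalizing constant, $\{\mathfrak{u}_r(n,k)\}_{k=0}^{n}$ is precisely the mass function of the Poisson-binomial variable $X_{n,r}$ with success probabilities $p_j=(1+r+j^2)^{-1}$, which is the meaning of ``Poisson-binomially distributed.''

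There is no real obstacle here: the statement is an immediate corollary of the real, suitably-signed linear factorization of $A_{n,r}(x)$. The only items that warrant a word of care are (i) observing $A_{n,r}(1)\neq 0$ so the normalization is legitimate, and (ii) checking that each $p_j$ lies in $(0,1]$ — the boundary value $p_0=1$ (occurring when $r=0$) corresponds to a degenerate Bernoulli factor, which is still admissible in a Poisson-binomial model. As an optional remark one can record the induced moments, namely $\mathbb{E}[X_{n,r}]=\sum_{j=0}^{n-1}(1+r+j^2)^{-1}$ and $\mathrm{Var}(X_{n,r})=\sum_{j=0}^{n-1}\dfrac{r+j^2}{(1+r+j^2)^2}$, though these are not needed for the theorem.
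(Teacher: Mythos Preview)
Your proof is correct and follows essentially the same approach as the paper's: both normalize the factored polynomial $\prod_{j=0}^{n-1}(x+r+j^2)$ by its value at $x=1$, identify each factor as the p.g.f.\ of a $\mathrm{Bernoulli}\bigl((1+r+j^2)^{-1}\bigr)$ variable, and read off the Poisson--binomial law for the sum, with the same mean and variance formulas. Your extra care about the degenerate case $p_0=1$ when $r=0$ is a nice touch not made explicit in the paper.
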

\begin{proof}
  Let us define random variables $Y_n$, $n=1,2,\cdots$, such that
  \begin{equation}\label{E:pr}
   P(Y_n=k) =\frac{\mathfrak{u}_r(n,k)}{\sum_{k=0}^{n}\mathfrak{u}_r(n,k)} =
    \frac{\mathfrak{u}_r(n,k)}{\prod_{i=0}^{n-1} (1+r+i^2)},\quad k=0,1,\cdots n.
  \end{equation}
 The probability generating function of $Y_n$ is given by
  \begin{equation} \label{E:pd}
 \begin{split}
 E(s^{Y_n})&=\sum_{k=0}^{n} s^k P(Y_n=k) = \prod_{i=0}^{n-1} \frac{s+r+i^2}{1+r+i^2}. \\
 &=\prod_{i=1}^{n-1} \left(1-\frac{1}{1+r+i^2} +\frac{s}{1+r+i^2}\right). \\
  \end{split}
 \end{equation}
 Then $Y_n$  can be represented as a sum of independent zero-one Bernoulli random variables, $X_0, X_1, \cdots , X_{n-1}$
   with probabilities $p_i$ of succuss on $i$-th trial
  \begin{equation}
 p_i= P(X_i=1) =1-P(X_i=0) =\frac{1}{1+r+i^2},
 \end{equation}
and then the random variable $Y_n=\sum_{i=0}^{n-1}X_i$ follows the Poisson-binomial distribution (which is a generalization of the binomial distribution) with mean and variance given by
\begin{equation}
E(Y_n)=\sum_{i=0}^{n-1} p_i=\sum_{i=0}^{n-1}\frac{1}{1+r+i^2}
\end{equation}
\begin{equation}
Var(Y_n)=\sum_{i=0}^{n-1} p_i(1-p_i)=   \sum_{i=0}^{n-1}\frac{r+i^2 }{(1+r+i^2)^2}.
\end{equation}
Note from (\ref{E:pr}) that $P(Y_n = k)$ differs from  the array $\mathfrak{u}_r(n, k)$  only by a normalizing constant, and thus completely characterizes the distribution of $\mathfrak{u}_r(n, k)$.
\end{proof}
Using the same previous assumption, one can get alternative proof of Theorem \ref{T:stc} as follows:

Let $f_n(k)=P(Y_n=k)$ be the probability distribution function of $Y_n$ defined in (\ref{E:pr}), equation (\ref{E:pd}) can be rewritten in the form
  \begin{equation*} \sum_{k=0}^{n} f_n(k)s^k=\prod_{i=0}^{n-1}( 1-p_i+p_i s).
 \end{equation*}

 An inequality of Newton found in \cite[page 104]{hardy59} and \cite{sam} states that if $\{a_i\}_{i=0}^{n-1}$ are any non-zero real numbers (positive or negative) and if $\{b_i\}_{i=0}^n$ are defined by
 \begin{equation*}
   \sum_{k=0}^{n} \binom{n}{k}  b_k s^k=\prod_{i=0}^{n-1} (1+a_i s),
 \end{equation*}
 then
 \begin{equation}
 b_k^2 > b_{k-1}b_{k+1} \quad \text{for} \: k=1, 2, \cdots , n.
\end{equation}
Setting $ b_k=\tfrac{f_n(k)}{\binom{n}{k}}$, we obtain
\begin{equation}
\left(\frac{f_n(k)}{\binom{n}{k}} \right)^2 >  \left(\frac{f_n(k-1)}{\binom{n}{k-1}} \right)\left(\frac{f_n(k+1)}{\binom{n}{k+1}} \right).
 \end{equation}
So, we have the inequality
 \[
\left(f_n(k)\right)^2 >f_n(k-1)f_n(k+1),
 \]
 that is $f_n(k)$ is strictly long-concave. Since $ f_n(k)= \mathfrak{u}_r(n,k)\prod_{i=0}^{n-1}p_i$, and $\prod_{i=0}^{n-1}p_i$ is clearly strict log-concave, then $\mathfrak{u}_r(n,k)$ also.
\section{Identities of the $r$-central factorial numbers with even indices}

\begin{theorem} For fixed $n\geq 0$, the generating functions of the arrays $u_r(n,k)$ and $U_r(n,k)$, are given, respectively,  by
\begin{equation}\label{gfu1}
\sum_{k=0}^{n}(-1)^{k}\:u_r(n,n-k)t^k=\prod_{j=0}^{n-1}(1+(j^2+r)t).
\end{equation}
\begin{equation}\label{gfu2}
  \sum_{n=k}^{\infty}U_r(n,k)t^{n}=\frac{t^k}{\prod_{j=0}^{k}(1-(j^2+r)t)}, \quad k\geq 0.
\end{equation}
\end{theorem}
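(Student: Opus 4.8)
The plan is to treat the two generating functions separately, in each case reducing the claim to an identity already available in the excerpt. For (\ref{gfu1}) I would use the observation that its left-hand side is, up to the substitution $t\mapsto 1/t$ and a factor $t^{n}$, the polynomial $A_{n,r}$ from the proof of Theorem~\ref{T:stc}, where we found
\[
A_{n,r}(x)=\sum_{k=0}^{n}\mathfrak{u}_r(n,k)\,x^{k}=\prod_{j=0}^{n-1}(x+r+j^{2}),\qquad \mathfrak{u}_r(n,k)=(-1)^{n-k}u_r(n,k).
\]
Since $(-1)^{k}u_r(n,n-k)=(-1)^{k}(-1)^{\,n-(n-k)}\mathfrak{u}_r(n,n-k)=(-1)^{2k}\mathfrak{u}_r(n,n-k)=\mathfrak{u}_r(n,n-k)$, the left-hand side of (\ref{gfu1}) equals
\[
\sum_{k=0}^{n}(-1)^{k}u_r(n,n-k)\,t^{k}=\sum_{k=0}^{n}\mathfrak{u}_r(n,n-k)\,t^{k}=\sum_{j=0}^{n}\mathfrak{u}_r(n,j)\,t^{\,n-j}=t^{n}A_{n,r}\!\left(\tfrac{1}{t}\right).
\]
Substituting $x=1/t$ into the product form of $A_{n,r}$ and multiplying through by $t^{n}$ turns each factor $\tfrac1t+r+j^{2}$ into $1+(j^{2}+r)t$, producing exactly $\prod_{j=0}^{n-1}\bigl(1+(j^{2}+r)t\bigr)$. (Alternatively, (\ref{gfu1}) can be proven by a direct induction on $n$: with $G_n(t)=\sum_{k=0}^{n}(-1)^{k}u_r(n,n-k)t^{k}$, inserting (\ref{re1}) in the form $u_r(n,n-k)=u_r(n-1,(n-1)-k)-((n-1)^{2}+r)u_r(n-1,(n-1)-(k-1))$, shifting the summation index, and discarding the out-of-range terms $u_r(n-1,n)=u_r(n-1,-1)=0$, one obtains $G_n(t)=\bigl(1+((n-1)^{2}+r)t\bigr)G_{n-1}(t)$ with $G_0(t)=1$.)

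For (\ref{gfu2}) I would argue by induction on $k$ using the recurrence (\ref{rcent222}), namely $U_r(n,k)=\sum_{l=k}^{n}U_r(l-1,k-1)(k^{2}+r)^{n-l}$. Put $F_k(t)=\sum_{n\ge k}U_r(n,k)t^{n}$; this is a well-defined power series beginning at order $t^{k}$, since the defining recurrence (\ref{re2}) gives $U_r(n,k)=0$ for $n<k$. Substituting (\ref{rcent222}) and interchanging the two summations (each coefficient of $t^{n}$ being a finite sum) turns the sum over $n$ into a geometric series:
\[
F_k(t)=\sum_{l\ge k}U_r(l-1,k-1)\,t^{l}\sum_{m\ge 0}\bigl((k^{2}+r)t\bigr)^{m}=\frac{1}{1-(k^{2}+r)t}\sum_{l\ge k}U_r(l-1,k-1)\,t^{l}.
\]
Re-indexing the remaining sum, $\sum_{l\ge k}U_r(l-1,k-1)t^{l}=t\sum_{p\ge k-1}U_r(p,k-1)t^{p}=t\,F_{k-1}(t)$, yields the recursion $F_k(t)=\dfrac{t}{1-(k^{2}+r)t}\,F_{k-1}(t)$ for $k\ge 1$. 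The base case is $F_0(t)=\sum_{n\ge 0}r^{n}t^{n}=\dfrac{1}{1-rt}$ by the initial value $U_r(n,0)=r^{n}$, which agrees with the right-hand side of (\ref{gfu2}) at $k=0$ (its only product factor being $1-rt$). Iterating the recursion then gives $F_k(t)=\dfrac{t^{k}}{\prod_{j=0}^{k}\bigl(1-(j^{2}+r)t\bigr)}$.

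I do not anticipate a genuine obstacle in either part — the work is essentially careful index manipulation. The two places that need attention are: in (\ref{gfu1}), the sign collapse $(-1)^{2k}=1$ together with the vanishing of the boundary coefficients if the inductive route is chosen; and in (\ref{gfu2}), justifying the rearrangement of the double sum and the shift $\sum_{l\ge k}U_r(l-1,k-1)t^{l}=t\sum_{p\ge k-1}U_r(p,k-1)t^{p}$, along with the remark that $U_r(n,k)=0$ for $n<k$ so that $F_k$ genuinely starts at $t^{k}$. If one wishes to avoid (\ref{rcent222}), an alternative for (\ref{gfu2}) is to substitute the explicit formula (\ref{E:rcent22}) and sum each geometric series $\sum_{n}(j^{2}+r)^{n}t^{n}=\tfrac{1}{1-(j^{2}+r)t}$, reducing the claim to a partial-fraction decomposition of $t^{k}/\prod_{j=0}^{k}(1-(j^{2}+r)t)$; this is correct but more laborious than the short induction above.
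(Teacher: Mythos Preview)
Your proposal is correct and follows essentially the same route as the paper: for (\ref{gfu1}) both arguments amount to the substitution $x\mapsto 1/t$ in the polynomial identity obtained from (\ref{E:rcent1}), and for (\ref{gfu2}) both establish the same recursion $F_k(t)=\dfrac{t}{1-(k^{2}+r)t}\,F_{k-1}(t)$ with base case $F_0(t)=1/(1-rt)$ and iterate. The only cosmetic difference is that the paper derives this recursion directly from the one-step recurrence (\ref{re2}) rather than via (\ref{rcent222}), which is slightly shorter since it avoids the interchange of sums.
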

\begin{proof}
  From (\ref{E:rcent1}), we get
  \begin{equation*}
    \sum_{k=0}^{n}u_r(n,k)x^k=\prod_{j=0}^{n-1}(x-(j^2+r)),
  \end{equation*}
setting $t^{-1}$ in place of $x$, then multiplying both sides by $t^n$, we obtain
\begin{equation*}
 \sum_{k=0}^{n}u_r(n,k) t^{n-k} =\prod_{j=0}^{n-1}(1-(j^2+r)t),
\end{equation*}
replacing $t$ by $-t$,
\begin{equation*}
 \sum_{k=0}^{n}(-1)^{n-k}\:u_r(n,k) t^{n-k} =\prod_{j=0}^{n-1}(1+(j^2+r)t),
\end{equation*}
then replace $k$ by $n-k$.

For (\ref{gfu2}), let $U_r^{(k)}(t)=\sum_{n\geq k}U_r(n,k)t^n$, then the initial condition is given by
\begin{equation*}
  U_r^{(0)}(t)=\sum_{n\geq k}U_r(n,0)t^n=\sum_{n\geq k}(rt)^n=\frac{1}{1-rt}.
\end{equation*}
 Multiplying both sides of (\ref{re2}) by $t^n$ and summing over $n \geq k$ gives
\begin{equation*}
  U_r^{(k)}(t)=t  U_r^{(k-1)}(t)+(k^2+r)t U_r^{(k)}(t).
\end{equation*}
Then
\begin{equation}
  U_r^{(k)}(t)=\frac{t}{1-(k^2+r)t} \:  U_r^{(k-1)}(t),\quad k\geq 1.
\end{equation}
Iterating this recurrence gives
\begin{equation*}
  U_r^{(k)}(t)=U_r^{(0)}(t)\frac{t}{1-(1+r)t}\:\frac{t}{1-(2^2+r)t}\:\cdots \frac{t}{1-(k^2+r)t}.
\end{equation*}
\end{proof}
Given a set of variables $z_1, z_2, \cdots , z_n$, the $k$-th elementary symmetric
function $\sigma_k(z_1 , z_2 , \ldots , z_n)$ and
the k-th complete homogeneous symmetric function $h_k(z_1 , z_2 , \ldots , z_n)$ are given, respectively, by
\[
 \sigma_k(z_1 , z_2 , \ldots , z_n)=\sum_{1 \leq j_1 < j_2 < \cdots < j_k \leq n}\:  z_{j_1}z_{j_2}\cdots z_{j_k}, \quad 1 \leq k\leq n
 \]
 \[
 h_k(z_1 , z_2 , \ldots , z_n)=\sum_{1\leq j_1 \leq j_2\leq \cdots\leq j_k \leq n}\:  z_{j_1}x_{z_2}\cdots z_{j_k}, \quad 1 \leq k\leq n
 \]
 by convention, we set $ \sigma_0(z_1 , z_2 , \ldots , z_n)=h_0(z_1 , z_2 , \ldots , z_n)=1$.
For $k > n$ or $k < 0$, we set $ \sigma_k(z_1 , z_2 , \ldots , z_n)=0$ and $ h_k(z_1 , z_2 , \ldots , z_n)=0$.

The generating functions of $\sigma_k $ and $h_k$ are given by
\begin{equation}
  \sum_{k\geq 0} \sigma_k(z_1 , z_2 , \ldots , z_n)t^k=\prod_{i=1}^{n}(1+z_it).
\end{equation}
\begin{equation}
 \sum_{k\geq 0} h_k(z_1 , z_2 , \ldots , z_n)t^k=\prod_{i=1}^{n}(1-z_it)^{-1}.
\end{equation}
Merca \cite{merca12} showed that
\begin{equation}\label{mer12}
 \sigma_i(z_1^2,\cdots,z_n^2)=\sum_{j=-i}^{i}(-1)^{j}\sigma_{i+j}(z_1,\cdots ,z_n)\sigma_{i-j}(z_1, \cdots, z_n).
\end{equation}

From (\ref{gfu1}) and (\ref{gfu2}), we deduce  that the numbers $u_r(n,k)$ and $U_r(n,k)$ are the elementary and complete symmetric functions:
\begin{equation}\label{E:srect1}
   u_r(n,n-k)=(-1)^{k} \sigma_{k}(r, 1^2+r, 2^2+r, \cdots , (n-1)^2+r).
\end{equation}
\begin{equation}\label{E:srect2}
   U_r(n+k,n)= h_{k}(r, 1^2+r, 2^2+r, \cdots , n^2+r).
  \end{equation}
At $r=0$, the central factorial numbers with even indices of both kinds satisfy
\begin{equation}
   u(n,n-k)=(-1)^{k} \sigma_{k}( 1^2, 2^2, \cdots , (n-1)^2).
\end{equation}
\begin{equation}
   U(n+k,n)= h_{k}(1^2, 2^2, \cdots , n^2).
  \end{equation}

\begin{theorem}\textnormal{(Merca \cite{merca13}).} Let $k$ and $n$ be two positive integers, then
\begin{equation}\label{mer13}
  g_k(z_1+t,z_2+t,\cdots , z_n+t)=\sum_{i=0}^{k}\binom{n-c_i}{k-i}g_i(z_1,z_2,\cdots,z_n)t^{k-i},
\end{equation}
where $t, z_1, z_2, \cdots, z_n$ are variables, $g$ is any of these complete or
elementary symmetric functions and
\begin{equation*}
  c_i=\begin{cases}
        i, & \mbox{if } g_i=\sigma_i, \\
        1-k, & \mbox{if} \:g_i=h_i.
      \end{cases}
\end{equation*}
\end{theorem}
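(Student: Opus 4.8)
The plan is to obtain both instances of (\ref{mer13}) — the case $g=\sigma$ and the case $g=h$ — from the two generating-function identities
\[
\sum_{k\geq 0}\sigma_k(z_1,\ldots,z_n)\,x^k=\prod_{i=1}^{n}(1+z_i x),
\qquad
\sum_{k\geq 0}h_k(z_1,\ldots,z_n)\,x^k=\prod_{i=1}^{n}(1-z_i x)^{-1},
\]
regarded as identities in the ring of formal power series in an auxiliary variable $x$, with the $z_j$ and $t$ treated as constants. The single idea behind both cases is to absorb the shift by $t$ into an overall power of $1\pm tx$, leaving a product that is again a generating function of the unshifted symmetric functions, now in a rescaled variable.

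For the elementary symmetric functions I would factor each factor of the shifted generating function as $1+(z_i+t)x=(1+tx)\bigl(1+\tfrac{z_i x}{1+tx}\bigr)$, pull $(1+tx)^n$ out front, and recognize the remaining product as the $\sigma$-generating function evaluated at $x/(1+tx)$. This gives
\[
\sum_{k\geq 0}\sigma_k(z_1+t,\ldots,z_n+t)\,x^k=\sum_{i\geq 0}\sigma_i(z_1,\ldots,z_n)\,x^i\,(1+tx)^{\,n-i}.
\]
Expanding $(1+tx)^{n-i}$ by the binomial theorem and reading off the coefficient of $x^k$ (which forces the power of $t$ to be $k-i$) yields $\sigma_k(z_1+t,\ldots,z_n+t)=\sum_{i=0}^{k}\binom{n-i}{k-i}\sigma_i(z_1,\ldots,z_n)\,t^{k-i}$, that is, (\ref{mer13}) with $c_i=i$.

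The complete homogeneous case runs in parallel with $1-tx$ in place of $1+tx$: from $1-(z_i+t)x=(1-tx)\bigl(1-\tfrac{z_i x}{1-tx}\bigr)$ one obtains
\[
\sum_{k\geq 0}h_k(z_1+t,\ldots,z_n+t)\,x^k=\sum_{i\geq 0}h_i(z_1,\ldots,z_n)\,x^i\,(1-tx)^{-(n+i)}.
\]
Here the step that requires care — and which I expect to be the only genuine bookkeeping point — is the negative-binomial expansion $(1-tx)^{-(n+i)}=\sum_{m\geq 0}\binom{n+i+m-1}{m}t^m x^m$: on setting $m=k-i$ the upper index collapses to $n+i+(k-i)-1=n+k-1$, which no longer depends on $i$, so the coefficient of $x^k$ equals $\sum_{i=0}^{k}\binom{n+k-1}{k-i}h_i(z_1,\ldots,z_n)\,t^{k-i}$, and since $n+k-1=n-(1-k)$ this is precisely (\ref{mer13}) with $c_i=1-k$.

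Finally I would note that the boundary behaviour is automatic: for $\sigma$, when $k>n$ each $\binom{n-i}{k-i}$ with $i\le k$ vanishes, matching $\sigma_k=0$, and terms with $i>n$ drop out since $\sigma_i=0$ there; in the $h$-case every index that occurs is legitimate. Hence the identity holds for all positive integers $n,k$ exactly as stated, the proof being conceptually just the two generating functions together with one reindexing, with the homogeneous-case binomial manipulation the one spot that must be checked carefully.
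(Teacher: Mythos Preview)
Your argument is correct, and the bookkeeping in both cases---in particular the collapse $n+i+(k-i)-1=n+k-1$ in the negative-binomial step---checks out. Note, however, that the paper does not supply its own proof of this theorem: it is quoted from Merca \cite{merca13} and used as a black box in the subsequent derivations of (\ref{ev1}), (\ref{ev11}), (\ref{stir}) and (\ref{cr2}). Your generating-function approach, factoring $1\pm(z_i+t)x=(1\pm tx)\bigl(1\pm z_ix/(1\pm tx)\bigr)$ and extracting coefficients, is the standard and most transparent route to this identity, so there is nothing to compare against here beyond observing that you have filled in what the paper deliberately omits.
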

In the next theorem, we show that the array $u_r(n,k)$ can be expressed in terms of $u(n,k)$ and vice versa.
\begin{theorem} Let $r$ be a positive integer, then
\begin{equation}\label{ev1}
  u_r(n,k)=\sum_{i=k}^{n}\binom{i}{k}\:(-r)^{i-k}u(n,i).
\end{equation}
\begin{equation}\label{ev11}
  u(n,k)=\sum_{i=k}^{n}\binom{i}{k}\:r^{i-k} \:u_r(n,i).
\end{equation}
\end{theorem}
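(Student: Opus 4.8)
The plan is to deduce both identities from the single polynomial identity (\ref{E:rcent1}). Setting $r=0$ there and recalling that $u_0(n,k)=u(n,k)$ gives $\prod_{i=0}^{n-1}(x-i^2)=\sum_{i=0}^{n}u(n,i)x^i$; comparing this with (\ref{E:rcent1}) for general $r$ yields the key identity $\sum_{k=0}^{n}u_r(n,k)(x+r)^k=\sum_{i=0}^{n}u(n,i)x^i$, valid as an identity of polynomials in $x$ (hence coefficientwise). Everything then follows by expanding one side with the binomial theorem and matching coefficients.

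For (\ref{ev1}) I would substitute $y=x+r$, so the right-hand side becomes $\sum_{i=0}^{n}u(n,i)(y-r)^i=\sum_{i=0}^{n}u(n,i)\sum_{k=0}^{i}\binom{i}{k}(-r)^{i-k}y^k$; interchanging the two summations and reading off the coefficient of $y^k$ gives $u_r(n,k)=\sum_{i=k}^{n}\binom{i}{k}(-r)^{i-k}u(n,i)$. For (\ref{ev11}) I would instead expand the left-hand side of the key identity directly: $(x+r)^k=\sum_{j=0}^{k}\binom{k}{j}r^{k-j}x^j$, so $\sum_{k=0}^{n}u_r(n,k)(x+r)^k=\sum_{j=0}^{n}\Big(\sum_{k=j}^{n}\binom{k}{j}r^{k-j}u_r(n,k)\Big)x^j$; comparing with $\sum_{i=0}^{n}u(n,i)x^i$ and renaming indices gives $u(n,k)=\sum_{i=k}^{n}\binom{i}{k}r^{i-k}u_r(n,i)$. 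I would also remark that (\ref{ev1}) and (\ref{ev11}) are a standard binomial-inversion pair, since the lower-triangular matrices $\big[\binom{i}{k}(-r)^{i-k}\big]$ and $\big[\binom{i}{k}r^{i-k}\big]$ are mutually inverse; thus either identity formally implies the other, and one only needs to verify one of them.

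An alternative, more in the spirit of Section~4, is to start from the symmetric-function expression (\ref{E:srect1}), namely $u_r(n,n-k)=(-1)^{k}\sigma_{k}(0+r,1^2+r,\ldots,(n-1)^2+r)$, and apply Merca's identity (\ref{mer13}) with $g=\sigma$, with the $n$ variables $z_1=0,z_2=1^2,\ldots,z_n=(n-1)^2$, with $t=r$ and $c_i=i$. This gives $u_r(n,n-k)=(-1)^k\sum_{i=0}^{k}\binom{n-i}{k-i}(-1)^i u(n,n-i)r^{k-i}$, and then the substitutions $k\mapsto n-k$ and $i\mapsto n-j$, together with $\binom{n-i}{k-i}=\binom{j}{k}$ and the sign simplification $(-1)^{(n-k)+(n-j)}=(-1)^{j-k}$, recover (\ref{ev1}); (\ref{ev11}) follows by binomial inversion or by the dual application of (\ref{mer13}) with $t=-r$.

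I do not anticipate a genuine obstacle here: the content is the key polynomial identity obtained from (\ref{E:rcent1}), after which the proof is pure bookkeeping. The only points requiring a little care are the interchange of the finite double sums and the re-indexing (especially the sign tracking in the $(-r)^{i-k}$ factor), and, if the symmetric-function route is used, matching the binomial coefficient $\binom{n-i}{k-i}$ coming out of (\ref{mer13}) with the $\binom{i}{k}$ appearing in the statement.
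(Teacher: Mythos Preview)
Your proposal is correct. Your primary route---comparing the two specializations of (\ref{E:rcent1}) to obtain the polynomial identity $\sum_{k}u_r(n,k)(x+r)^k=\sum_{i}u(n,i)x^i$ and then expanding with the binomial theorem---is more elementary than the paper's argument: it uses nothing beyond (\ref{E:rcent1}) and finite binomial expansions, and it yields both (\ref{ev1}) and (\ref{ev11}) symmetrically, with the binomial-inversion remark making the mutual implication transparent. The paper instead takes exactly the ``alternative'' you sketch: it invokes the symmetric-function representation (\ref{E:srect1}) and Merca's identity (\ref{mer13}) with $g=\sigma$ and $t=r$ to get (\ref{ev1}), and then reapplies (\ref{mer13}) with $t=-r$ for (\ref{ev11}), followed by the same re-indexing $k\mapsto n-k$, $n-i\mapsto i$ that you describe. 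The trade-off is that your argument is self-contained and shorter, while the paper's approach situates the identities within the symmetric-function framework of Section~4 and reuses the machinery already set up for Theorems~9 and~10.
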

\begin{proof}
Using (\ref{E:srect1}) and (\ref{mer13}), we obtain
\begin{equation*}
\begin{split}
  u_r(n,n-k)&=(-1)^{k} \sigma_{k}(r, 1^2+r, 2^2+r, \cdots , (n-1)^2+r)\\
  &=(-1)^k \sum_{i=0}^{k}\binom{n-i}{k-i}\sigma_i(0,1^2,\cdots,(n-1)^2)r^{k-i}\\
&= \sum_{i=0}^{k}\binom{n-i}{k-i}(-1)^i\;\sigma_i(0,1^2,\cdots,(n-1)^2)(-r)^{k-i}\\
&=\sum_{i=0}^{k}\binom{n-i}{k-i}u(n,n-i)(-r)^{k-i}.\\
\end{split}
\end{equation*}
Then replace $k$ by $n-k$ and $n-i$ by $i$ in that order.

For (\ref{ev11}), note that
\begin{equation*}
  \begin{split}
     u(n,n-k)&=(-1)^k \sigma_k(0,1^2,\cdots, (n-1)^2)  \\
       & =(-1)^k\sum_{i=0}^{k}\binom{n-i}{k-i} \sigma_i(r,1^2+r,\cdots, (n-1)^2+r)(-r)^{k-i}\\
       & =\sum_{i=0}^{k}\binom{n-i}{k-i}(r)^{k-i}u_r(n,n-i).
  \end{split}
\end{equation*}
\end{proof}
The $r$-central numbers with even indices of the first kind, $u_r(n,k)$ may be also
calculated by Stirling numbers $s(n, k)$ of the first kind as follows.
\begin{theorem}If $n,k,r \geq 0$, then

\begin{equation}\label{stir}
   u_r(n,n-k)= \sum_{i=0}^{k} \sum_{j=-i}^{i}(-1)^{k+j} \binom{n-i}{k-i}s(n, n-i+j) s(n, n-i-j)r^{k-i}.
\end{equation}
\end{theorem}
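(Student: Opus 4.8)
The plan is to chain together three facts already in hand: the expansion (\ref{ev1}) of $u_r(n,k)$ in terms of the $r=0$ numbers $u(n,k)$, the elementary-symmetric-function description (\ref{E:srect1}), and Merca's squaring identity (\ref{mer12}); the last ingredient is the classical fact that the elementary symmetric functions of $1,2,\cdots,n-1$ are, up to sign, Stirling numbers of the first kind.

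First I would put (\ref{ev1}) into the form needed on the left-hand side of (\ref{stir}): replacing $k$ by $n-k$ and substituting $i\mapsto n-i$ in the summation index (and using $\binom{n-i}{n-k}=\binom{n-i}{k-i}$) gives
\[
u_r(n,n-k)=\sum_{i=0}^{k}\binom{n-i}{k-i}(-r)^{k-i}\,u(n,n-i).
\]
Next, the $r=0$ case of (\ref{E:srect1}) reads $u(n,n-i)=(-1)^{i}\sigma_i(0,1^2,2^2,\cdots,(n-1)^2)=(-1)^{i}\sigma_i(1^2,\cdots,(n-1)^2)$, the zero variable being irrelevant. Applying (\ref{mer12}) with $z_\ell=\ell$ for $\ell=1,\cdots,n-1$ turns $\sigma_i(1^2,\cdots,(n-1)^2)$ into $\sum_{j=-i}^{i}(-1)^{j}\sigma_{i+j}(1,\cdots,n-1)\,\sigma_{i-j}(1,\cdots,n-1)$.

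The remaining step is the identification $\sigma_m(1,2,\cdots,n-1)=(-1)^{m}s(n,n-m)$. This comes from $\prod_{\ell=0}^{n-1}(x+\ell)=\sum_{k}|s(n,k)|\,x^{k}$: dividing by $x$ gives $\prod_{\ell=1}^{n-1}(x+\ell)=\sum_{m}\sigma_m(1,\cdots,n-1)\,x^{n-1-m}$, hence $\sigma_m(1,\cdots,n-1)=|s(n,n-m)|=(-1)^{m}s(n,n-m)$ by the relation $|s(n,k)|=(-1)^{n-k}s(n,k)$. Substituting this into the expression for $\sigma_i(1^2,\cdots,(n-1)^2)$ and collecting signs (the factor $(-1)^{j}(-1)^{i+j}(-1)^{i-j}$ reduces to $(-1)^{j}$) yields
\[
u(n,n-i)=(-1)^{i}\sum_{j=-i}^{i}(-1)^{j}\,s(n,n-i-j)\,s(n,n-i+j).
\]
Putting this back into the displayed form of (\ref{ev1}) and merging $(-r)^{k-i}(-1)^{i}(-1)^{j}=(-1)^{k+j}\,r^{k-i}$ produces (\ref{stir}) verbatim. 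I expect no conceptual obstacle; the only thing requiring care is the bookkeeping — keeping the two index reflections $k\leftrightarrow n-k$ and $i\leftrightarrow n-i$ consistent, and tracking the several sign conventions (the $(-1)^{j}$ of (\ref{mer12}), the signed-versus-unsigned Stirling convention, and the $(-r)^{k-i}$ in (\ref{ev1})) so that they cancel to the single clean factor $(-1)^{k+j}$.
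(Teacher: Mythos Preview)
Your argument is correct and is essentially the paper's own proof: the paper starts from (\ref{E:srect1}), applies Merca's shift identity (\ref{mer13}) to pass from the variables $j^2+r$ to $j^2$, then applies (\ref{mer12}) and the Stirling identification $\sigma_m(0,1,\ldots,n-1)=(-1)^m s(n,n-m)$. Your only difference is that instead of invoking (\ref{mer13}) directly you quote its consequence (\ref{ev1}), which the paper has already derived by the same route; after that the two proofs coincide line for line, including the sign bookkeeping.
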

\begin{proof} From (\ref{mer12}), (\ref{E:srect1}) and (\ref{mer13}), we get
\begin{equation*}
\begin{split}
  u_r(n,n-k)&=(-1)^{k} \sigma_{k}(r, 1^2+r, 2^2+r, \cdots , (n-1)^2+r)\\
  &=(-1)^k \sum_{i=0}^{k}\binom{n-i}{k-i}\sigma_i(0,1^2,\cdots,(n-1)^2)r^{k-i}\\
&= \sum_{i=0}^{k} \sum_{j=-i}^{i} (-1)^{k+j} \binom{n-i}{k-i}\sigma_{i+j}(0,1,\cdots ,n-1)\sigma_{i-j}(0, 1, 2, \cdots, n-1)r^{k-i}\\
&= \sum_{i=0}^{k} \sum_{j=-i}^{i}(-1)^{k+j} \binom{n-i}{k-i}s(n, n-i+j) s(n, n-i-j)r^{k-i}.\\
\end{split}
\end{equation*}
 \end{proof}
\begin{remark}
From (\ref{stir}), we note that at $i=k$, we get the central factorial numbers with even indices of the first kind, $u(n,k)$ in terms of Stirling numbers of the first kind  (see \cite{merca12})
 \begin{equation*}
    u(n,n-k)=  \sum_{j=-k}^{k}(-1)^{k+j} s(n, n-k+j) s(n, n-k-j).
 \end{equation*}
 \end{remark}
\begin{example} For example
\begin{equation*}
  u_r(5,3)= \sum_{i=0}^{2} \sum_{j=-i}^{i}(-1)^{2+j} \binom{5-i}{2-i}s(5, 5-i+j) s(5, 5-i-j)r^{2-i}=10r^2+120 r+273.
\end{equation*}
\begin{equation*}
  u_r(6,5)= \sum_{i=0}^{1} \sum_{j=-i}^{i}(-1)^{1+j} \binom{6-i}{1-i}s(6, 6-i+j) s(6, 6-i-j)r^{1-i}=-6r-55.
\end{equation*}
\end{example}
\begin{remark}
We can give alternative proof of (\ref{ev2}) by using (\ref{E:srect2}) and (\ref{mer13}) as follows
\begin{equation*}
\begin{split}
  U_r(n+k,n)&=h_k(r,1^2+r, \cdots, n^2+r)\\
  &=\sum_{i=0}^{k}\binom{n+k}{k-i}h_i(0^2, 1^2, \cdots, n^2)r^{k-i}\\
  &=\sum_{i=0}^{k}\binom{n+k}{k-i}U(n+i,n)r^{k-i}.
  \end{split}
\end{equation*}
and then replace $n$ by $n-k$, $k$ by $n-k$, and $i$ by $i-k$ in that order.
\end{remark}
The array $U_r(n.k)$ expressed in terms of $U(n,k)$ by (\ref{ev2}), here we show that the array $U(n,k)$ can also be expressed in terms of $U_r(n.k)$.
\begin{theorem} For $n, k, r \geq 0$,
\begin{equation}\label{cr2}
  U(n,k)=\sum_{i=k}^{n}\binom{n}{i}(-r)^{n-i}U_r(i,k)
\end{equation}
\end{theorem}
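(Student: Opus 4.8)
The plan is to recognize (\ref{cr2}) as the binomial-inversion companion of (\ref{ev2}): for fixed $k$, equation (\ref{ev2}) says that $\big(U_r(n,k)\big)_{n\ge k}$ is the binomial transform (with parameter $r$) of $\big(U(n,k)\big)_{n\ge k}$, so inverting it with parameter $-r$ must return $U(n,k)$. Rather than invoke an abstract inversion lemma, I would prove (\ref{cr2}) directly, and I have two essentially equivalent short routes: one through the exponential generating functions of Section~2, and one through the polynomial identities (\ref{E:rcent2}).

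For the generating-function route I would start from (\ref{E:ercent2}) and rewrite it, exactly as in the proof of Theorem~\ref{ev2} read backwards, as
\begin{equation*}
\sum_{n=k}^{\infty} U_r(n,k)\frac{t^n}{n!}= \frac{2}{(2k)!}\sum_{j=0}^{k}(-1)^{k+j}\binom{2k}{k-j}e^{(j^2+r)t}
= e^{rt}\sum_{l=k}^{\infty} U(l,k)\frac{t^l}{l!},
\end{equation*}
the last equality being (\ref{E:ecent2}). Multiplying by $e^{-rt}$ and expanding the Cauchy product,
\begin{equation*}
\sum_{l=k}^{\infty} U(l,k)\frac{t^l}{l!}
= \left(\sum_{m=0}^{\infty}\frac{(-r)^m t^m}{m!}\right)\left(\sum_{i=k}^{\infty} U_r(i,k)\frac{t^i}{i!}\right)
= \sum_{n=k}^{\infty}\left(\sum_{i=k}^{n}\binom{n}{i}(-r)^{n-i}U_r(i,k)\right)\frac{t^n}{n!},
\end{equation*}
and comparing the coefficients of $t^n/n!$ gives (\ref{cr2}).

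Alternatively, and perhaps more in keeping with the fact that the statement is phrased via (\ref{E:rcent2}), I would argue with polynomials. Put $p_k(x)=\prod_{i=0}^{k-1}(x-i^2)$; then (\ref{E:rcent2}) at $r=0$ gives $x^n=\sum_{k=0}^n U(n,k)p_k(x)$, while for general $r$ it gives $(x+r)^i=\sum_{k=0}^i U_r(i,k)p_k(x)$. Expanding $x^n=\big((x+r)-r\big)^n$ by the binomial theorem and substituting,
\begin{equation*}
x^n=\sum_{i=0}^{n}\binom{n}{i}(-r)^{n-i}(x+r)^i
=\sum_{i=0}^{n}\binom{n}{i}(-r)^{n-i}\sum_{k=0}^{i} U_r(i,k)p_k(x)
=\sum_{k=0}^{n}\left(\sum_{i=k}^{n}\binom{n}{i}(-r)^{n-i}U_r(i,k)\right)p_k(x).
\end{equation*}
Since $p_0,p_1,\dots,p_n$ have pairwise distinct degrees and are therefore linearly independent, matching the coefficient of $p_k(x)$ against $x^n=\sum_k U(n,k)p_k(x)$ yields (\ref{cr2}); the lower limit $i=k$ is automatic because $U_r(i,k)=0$ for $i<k$.

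I do not expect a genuine obstacle: both routes are a few lines. The only points needing a little care are the index bookkeeping — interchanging the order of the double sum and checking that the inner sum really begins at $i=k$ (equivalently, that the $i<k$ terms vanish) — and, in the generating-function route, the formal justification of the Cauchy product, which is immediate since the whole computation is an identity of formal power series in $t$.
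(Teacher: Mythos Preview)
Your proposal is correct; both routes you give are valid and short. However, the paper proves (\ref{cr2}) by a genuinely different method: it uses the representation (\ref{E:srect2}) of $U_r(n+k,n)$ as the complete homogeneous symmetric function $h_k(r,1^2+r,\dots,n^2+r)$, and then applies Merca's shift identity (\ref{mer13}) with $t=-r$ to write $U(n+k,n)=h_k(0,1^2,\dots,n^2)$ as a sum $\sum_{i=0}^{k}\binom{n+k}{k-i}(-r)^{k-i}U_r(n+i,n)$, finally relabelling indices. Your arguments stay entirely within the Section~2 machinery (the exponential generating function (\ref{E:ercent2}) or the polynomial expansion (\ref{E:rcent2})), which makes them more self-contained and exhibits (\ref{cr2}) transparently as the binomial inverse of (\ref{ev2}); the paper's route, by contrast, is the natural companion to its symmetric-function reproof of (\ref{ev2}) in the preceding remark and keeps Section~4 methodologically uniform.
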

\begin{proof}From (\ref{E:srect2}) and (\ref{mer13}), we have
  \begin{equation*}
    \begin{split}
       U(n+k,n)&=h_k(0,1^2,\cdots,n^2)  \\
         & =\sum_{i=0}^{k} \binom{n+k}{k-i}h_i(r,1^2+r,\cdots, n^2+r)(-r)^{k-i}\\
         & =\sum_{i=0}^{k} \binom{n+k}{k-i}(-r)^{k-i}U_r(n+i,n).
    \end{split}
  \end{equation*}
\end{proof}
\section{The $r$-central factorial matrices }
In the following, we introduce the $r$-central factorial matrices with even indices of both kinds, then we obtain  factorization of these matrices.

From (\ref{E:rcent1})and (\ref{E:rcent2}), we get the following orthogonal relation
\begin{equation}\label{orth}
  \sum_{k=i}^{n}u_r(n,k)\:U_r(k,i)=\sum_{k=i}^{n}U_r(n,k)\:u_r(k,i)=\delta_{ni}.
\end{equation}
An orthogonal relation (\ref{orth}) entails a pair of inverse relations
\begin{equation*}
  a_n=\sum_{k=0}^n U_r(n,k)\:b_k,
\end{equation*}
\begin{equation*}
   b_n=\sum_{k=0}^n u_r(n,k)\:a_k.
\end{equation*}
Inverse relations are known for practically all special numbers, such as Whitney numbers and Stirling numbers (see for example \cite{{beih16}, {qalpha},  mezo15, {riod}}).
\begin{definition}
The $r$-central factorial matrices with even indices of the first kind $\mathcal{U}_1(n)$, and of the second kind $\mathcal{U}_2(n)$ are the $n\times n$  matrices defined by
\begin{equation*}
\mathcal{U}_1(n):=\mathcal{U}_1^{(r)}(n)=[u_r(i,j)]_{0\leq i, j \leq n-1},
\end{equation*}
and
\begin{equation*}
\mathcal{U}_2(n):=\mathcal{U}_2^{(r)}(n)=[U_r(i,j)]_{0\leq i, j \leq n-1}.
\end{equation*}

\end{definition}
For example, $\mathcal{U}_1(5)$ is given by
{\footnotesize
\[
\left[
\begin{matrix}
  1                     & 0                    &0               &0           & 0         \\
  -r                    & 1                    & 0              & 0          & 0         \\
  r(1+r)                & -2r-1                & 1              & 0          & 0         \\
  -r(1+r)(2^2+r)        & 3r^2+10r+4           & -3r-5          & 1          & 0         \\
  r(1+r)(2^2+r)(3^2+r)  & -4r^3-42r^2-98r-36   & 6r^2+42r+49    & -4r-14     & 1
\end{matrix}
\right],
\]}
and $\mathcal{U}_2(5)$ is given by
\[
\left[
\begin{matrix}
1       & 0               & 0             &0          & 0         \\
r      & 1                & 0             & 0         & 0         \\
r^2    & 2r+1             & 1             & 0         & 0         \\
r^3    &3r^2+3r+1         & 3r+5          & 1         & 0         \\
r^4    &4r^3+6r^2+4r+1    & 6r^2+20r+21   & 4r+14     & 1
\end{matrix}
\right].
\]
In particular, setting $r=0$, we get the central factorial matrices with even indices of both kinds
\begin{equation*}
\mathcal{A}_1(n)=[u(i,j)]_{0\leq i, j \leq n-1}, \quad \textrm{and}  \quad \mathcal{A}_2(n)=[U(i,j)]_{0\leq i, j \leq n-1}.
\end{equation*}
The orthogonality property (\ref{orth}) is equivalent to the matrix equation
\begin{equation*}
  \mathcal{U}_1(n) \: \mathcal{U}_2(n)=\mathcal{U}_2(n) \:\mathcal{U}_1(n)=\mathbf{I},
\end{equation*}
with $\mathbf{I}$ is the $n\times n$ unit matrix. Hence,
\begin{equation*}
(\mathcal{U}_1(n))^{-1} =\mathcal{U}_2(n) \quad \text{and}\quad (\mathcal{U}_2(n))^{-1} =\mathcal{U}_1(n)
\end{equation*}
The generalized $n\times n$ Pascal matrix $\textbf{P}_n[z]$ is defined as follows (see \cite{call}):
\begin{equation}\label{pascal}
 \textbf{P}_n[z]=\left[\binom{i}{j} z^{i-j}\right]_{0\leq i, j \leq n-1},
\end{equation}
with $\textbf{P}_n=\textbf{P}_n[1]$, the Pascal matrix $\textbf{P}_n$ of order $n$.
For example
\[
\textbf{P}_5[z]=
\left[
\begin{matrix}
1       & 0           & 0        &0          & 0         \\
z      & 1            & 0        & 0         & 0         \\
z^2    & 2z           & 1        & 0         & 0         \\
z^3    &3z^2          & 3z       & 1         & 0         \\
z^4    &4z^3          & 6z^2     & 4z        & 1
\end{matrix}
\right]
\]
Moreover,
\[
\textbf{P}_n^{-1}[z]=\textbf{P}_n[-z]=\left[(-1)^{i-j}\binom{i}{j}z^{i-j}\right]_{0\leq i, j \leq n-1}
\]
From (\ref{ev1}) and (\ref{ev2}), we have the following factorization
\begin{equation}
  \mathcal{U}_1(n)=\mathcal{A}_1(n)P_n[-r],\quad n \geq 1,
\end{equation}
and
\begin{equation}
  \mathcal{U}_2(n)= P_n[r]\mathcal{A}_2(n),\quad n \geq 1.
\end{equation}
For example
\[
\begin{split}
\mathcal{U}_1(5)&=
\left[
\begin{matrix}
  1     & 0    &0      &0        & 0         \\
  0     & 1    & 0     & 0       & 0         \\
  0     & -1   & 1     & 0       & 0         \\
  0     & 4    &-5     & 1       & 0         \\
  0     &-36   & 49    & -14     & 1
\end{matrix}
\right]
\times
\left[
\begin{matrix}
1       & 0           & 0        &0          & 0         \\
-r      & 1            & 0        & 0         & 0         \\
r^2    & -2r           & 1        & 0         & 0         \\
-r^3    &3r^2          & -3r       & 1         & 0         \\
r^4    &-4r^3          & 6r^2     & -4r        & 1
\end{matrix}
\right]\\
&=\mathcal{A}_1(5)\textbf{P}_5[-r].
\end{split}
\]
and
\[
\begin{split}
\mathcal{U}_2(5)&=
\left[
\begin{matrix}
1       & 0           & 0        &0          & 0         \\
r      & 1            & 0        & 0         & 0         \\
r^2    & 2r           & 1        & 0         & 0         \\
r^3    &3r^2          & 3r       & 1         & 0         \\
r^4    &4r^3          & 6r^2     & 4r        & 1
\end{matrix}
\right]
\times
\left[
\begin{matrix}
  1     & 0    &0      &0        & 0         \\
  0     & 1    & 0     & 0       & 0         \\
  0     & 1   & 1     & 0       & 0         \\
  0     & 1    &5     & 1       & 0         \\
  0     &1   & 21    & 14     & 1
\end{matrix}
\right]\\
&=\textbf{P}_5[r]\mathcal{A}_2(5).
\end{split}
\]


\begin{thebibliography}{99}
\bibitem{but}P. L. Butzer, K. Schmidt, E.L. Stark and L. Vogt,  Central factorial numbers, their
   main properties and some applications, Numer. Funct. Anal. Optim., \textbf{10(5\&6)}(1989), 419--488.
\bibitem{call} G. S. Call and D. J. Velleman, Pascal's matrices, Amer. Math. Monthly \textbf{100} (1993), 372--376.
\bibitem{beih16} B. S. El-Desouky, Nenad P. Caki\'c, and F. A. Shiha, New Family of Whitney Numbers, Filomat \textbf{31(2)} (2017), 309--320.
\bibitem{qalpha} B. S. El-Desouky and F. A. Shiha  A q-analogue of $\bar{\alpha}$-Whitney Numbers, Appl. Anal. Discrete Math., \textbf{12} (2018), 178--191.
\bibitem{hardy59} Hardy, G.H., Littlewood,J. E. and Po'lya, G. Inequalities. Cambridge Univ., 1959.
\bibitem{gel}Y. Gelineau, J. Zeng, Combinatorial interpretations of the Jacobi-Stirling
    numbers, Electron. J. Combin., \textbf{17}(2010), Article Number: R70.
\bibitem{kim119} D.S. Kim, D.V. Dolgy, D. Kim, T. Kim, Some identities on $r$-central factorial numbers and
     $r$-central Bell polynomials, Adv. Difference Equ. \textbf{2019}, Article number: 245(2019).
\bibitem{kim19} T. Kim, D.S. Kim, G.-W. Jang, J. Kwon,  Extended central factorial polynomials of the second kind.
     Adv. Difference Equ. \textbf{2019}, Article number: 24(2019)
\bibitem{mansour12} T. Mansour, S. Mulay, M. Shattuck,   A general two-term recurrence and
    its solution. European. J. Combin., \textbf{33}(2012) 20--26.
\bibitem{merca12} M. Merca, A Special Case of the Generalized Girard-Waring Formula, J. Integer Seq., \textbf{15} (2012), Article 12.5.7.
\bibitem{merca13} M. Merca, A note on the r-Whitney numbers of Dowling lattices, C. R. Math. Acad. Sci. Paris, Ser. I \textbf{351} (2013), 649--655.
\bibitem{merca2016} M. Merca, Connections between central factorial numbers and Bernoulli polynomials,
    Period. Math. Hungar., \textbf{73(2)}(2016), 259--264.
\bibitem{mezo15}  I. Mez\"o and J. L.  Ram\'irez, The linear algebra of the r-Whitney matrices, Integral
Transforms Spec. Funct.\textbf{26(3)} (2015), 213--225.
\bibitem{mik}  Mikl\'os B\'ona, Combinatorics of Permutations, Chapman \& Hall/CRC (2004).
\bibitem{riod}J. Riordan,  Combinatorial Identities; JohnWiley \& Sons, Inc.: New York, NY, USA, 1968.
\bibitem{sam} S. M. Samuels, On the number of successes in independent trials, Ann. Math. Statist., \textbf{36(4)} (1965), 1272--1278.
 \bibitem{wilf} H.S. Wilf, Generating Functionology, Academic Press/Harcourt Brace Jovanovich, 1994.
 \end{thebibliography}
\end{document}